\newtheorem{thm}{Theorem}[section]
\newtheorem{cor}[thm]{Corollary}
\newtheorem{prop}[thm]{Proposition}
\theoremstyle{definition}
\newtheorem{defn}[thm]{Definition}
\theoremstyle{remark}
\newcommand{\grad}{\textrm{grad}}
\newcommand{\id}{\textrm{id}}
\title{Riemannian manifolds with two circulant structures}
\author{Iva Dokuzova}
\begin{document}

\maketitle
%

\begin{abstract}
We consider a  three-dimensional Riemannian manifold equipped with two circulant structures -- a metric $g$ and a structure $q$, which is an isometry with respect to $g$ and the third power of $q$ is minus identity. We discuss some curvature properties of this manifold, we give an example of such a manifold and find a condition for $q$ to be parallel with respect to the Riemannian connection of $g$.
\end{abstract}

\textbf{Mathematics Subject Classification (2010)}: 53C05,
53B20

\textbf{Keywords}: Riemannian metric, circulant matrix,
 curvature properties

\section*{Introduction}

 Important role in the geometry of the differentiable manifolds play additional structures. Often the structure satisfies an equation of second power, for example \cite{1}, \cite{3}, \cite{4}, but rarely it satisfies an equation of another power \cite{5}, \cite{10}, \cite{6}, \cite{7}. On the other hand the application of circulative structures in the geometry is interesting. In \cite{9} a geometric application of circulant matrices was obtained.

In this paper we consider a three-dimensional differentiable manifold $M$ with a Riemannian metric $g$ whose matrix in local coordinates is a special circulant matrix. Moreover, we consider an additional structure $q$ on $M$ with $q^{3}=-\id$ such that its matrix in local coordinates is also circulant. It turns out that $q$ is an isometry with respect to $g$. We note that there exists a local coordinate system, where the matrices of the components of these structures are circulant \cite{8}.

The paper is organized as follows. In Sect. \ref{1} we introduce on a three-dimensional differentiable manifold $M$ a Riemannian metric $g$ whose matrix in local coordinates is a special circulant matrix. Furthermore, we consider an additional structure $q$ on $M$ with $q^{3}=-\id$ such that its matrix in local coordinates is also circulant. Thus, the structure $q$ is an isometry with respect to $g$.
We denote by $(M, g, q)$ the manifold $M$ equipped with the metric $g$ and the structure $q$.
In Sect. \ref{2} we obtain the conditions under which an orthogonal basis of type $\{x, qx, q^{2}x\}$ exists in the tangent space of a manifold $(M, g, q)$. The main result there is Theorem \ref{t3.1}. We give an example of such an orthogonal basis. In Sect. \ref{3} we establish relations between the sectional curvatures of some special $2$-planes in the tangent space. In Sect. \ref{4} we obtain a necessary and sufficient condition for $q$ to be parallel with respect to the Riemannian connection of $g$. Finally, in Sect. \ref{5}, we give an example of the considered manifold $(M, g, q)$.


\section{Preliminaries}\label{1}
Let $M$ be a three-dimensional manifold with a Riemannian metric $g$. Let the matrix of the local components of the metric $g$ at an arbitrary point $p(X^{1}, X^{2}, X^{3})\in M$ be a circulant matrix of the following form:
\begin{equation}\label{f2}
    (g_{ij})=\begin{pmatrix}
      A & B & B \\
      B & A & B \\
      B & B & A \\
    \end{pmatrix},
\end{equation}
where $A$ and $B$ are smooth functions of $X^{1}, X^{2}, X^{3}$. We will suppose that
\begin{equation}\label{ab}
    A>B>0.
\end{equation}
Then the conditions to be a positive definite metric $g$
are satisfied:

\begin{equation*}
    A>0,\quad \begin{vmatrix}
      A & B\\
      B & A \\
    \end{vmatrix}=(A-B)(A+B)>0,
\end{equation*}
\begin{equation*}
     \begin{vmatrix}
      A & B & B \\
      B & A & B \\
      B & B & A \\
    \end{vmatrix}=(A-B)^{2}(A+2B)>0.
\end{equation*}

We denote by $(M, g)$ the manifold $M$ equipped with the Riemannian metric $g$ defined by (\ref{f2}) with conditions (\ref{ab}).

Let $q$ be an endomorphism in the tangent space $T_{p}M$ of the manifold $(M,g)$ with
\begin{equation}\label{1.3}
    q^{3}=-\id,\qquad q\neq-\id,
\end{equation}
where $\id$ is the identity in $T_{p}M$. We suppose the local coordinates of $q$ are given by the circulant matrix
\begin{equation*}
    (q_{i}^{j})=\begin{pmatrix}
      a_{1} & a_{2} & a_{3} \\
      a_{3} & a_{1} & a_{2} \\
      a_{2} & a_{3} & a_{1} \\
    \end{pmatrix},\quad (a_{1},\ a_{2},\ a_{3}\ \in \mathbb{R}).
\end{equation*}
 Due to (\ref{1.3}) we have $(q_{i}^{.j})^{3}=-E,\ (q_{i}^{.j})\neq -E$ ($E$ is the unit matrix). Then we get the following system for $a_{1}, a_{2}, a_{3}$:
\begin{align*}
    &a_{1}^{3}+a_{2}^{3}+a_{3}^{3}+6a_{1}a_{2}a_{3}=-1,\\
    &a_{1}^{2}a_{2}+a_{1}a_{3}^{2}+a_{2}^{2}a_{3}=0,\\
    &a_{1}a_{2}^{2}+a_{3}a_{1}^{2}+a_{3}^{2}a_{2}=0,
\end{align*}
where $(a_{1}, a_{2}, a_{3})\neq (-1,0, 0)$.
Hence we get two cases for $q$:
\begin{equation*}
\begin{pmatrix}
      0 & -1 & 0 \\
      0 & 0 & -1 \\
      -1 & 0 & 0 \\
    \end{pmatrix},\quad\begin{pmatrix}
      0 & 0 & -1 \\
      -1 & 0 & 0 \\
      0 & -1 & 0 \\
    \end{pmatrix}.
\end{equation*}
In the further studies of both cases similar results are obtained. We choose the case
\begin{equation}\label{f4}
    (q_{i}^{.j})=\begin{pmatrix}
      0 & -1 & 0 \\
      0 & 0 & -1 \\
      -1 & 0 & 0 \\
    \end{pmatrix}.
\end{equation}

We denote by $(M, g, q)$ the manifold $(M,g)$ equipped with the structure $q$ defined by (\ref{f4}).

Further, $x, y, z, u$ will stand for arbitrary elements of the algebra on the smooth vector fields on $M$ or vectors in the tangent space $T_{p}M$. The Einstein summation convention is used, the range of the summation indices being always $\{1, 2, 3\}$.

From (\ref{f2}) and (\ref{f4}) we get immediately the following
\begin{prop} The structure $q$ of the manifold $(M,g,q)$ is an isometry with respect to the metric $g$, i.e.
\begin{equation}\label{2.1}
    g(qx, qy)=g(x, y).
\end{equation}

\end{prop}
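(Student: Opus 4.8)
The plan is to verify the identity $g(qx,qy)=g(x,y)$ by a direct computation in local coordinates, using the explicit matrices (\ref{f2}) and (\ref{f4}). Writing $x=x^i\partial_i$ and $y=y^j\partial_j$, the left-hand side is $g(qx,qy)=g_{ij}q^i_{.k}q^j_{.l}x^k y^l$, so it suffices to show that the $3\times 3$ matrix $(q^i_{.k})^{T}(g_{ij})(q^j_{.l})$ equals $(g_{kl})$; equivalently $Q^{T}GQ=G$, where $G$ is the circulant matrix with first row $(A,B,B)$ and $Q$ is the matrix in (\ref{f4}).

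First I would observe that $Q$ is (minus) a permutation matrix: it sends $e_1\mapsto -e_3$, $e_2\mapsto -e_1$, $e_3\mapsto -e_2$ (reading columns), i.e. it is the negative of the cyclic permutation. Hence $Q^{T}GQ$ is obtained from $G$ by the sign factor $(-1)(-1)=1$ together with a simultaneous permutation of rows and columns by that cyclic permutation. The key step is then the elementary fact that a circulant matrix is invariant under simultaneous cyclic permutation of its rows and columns: permuting rows and columns of $G$ by the same cyclic shift just relabels the entries $A,B,B$ among themselves and leaves the circulant pattern unchanged, so $Q^{T}GQ=G$. I would make this explicit by carrying out the multiplication $Q^{T}GQ$ once and reading off that the result again has the form (\ref{f2}) with the same $A$ and $B$.

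Since this holds pointwise for the component matrices at every $p\in M$, and $x,y$ were arbitrary, the bilinear identity (\ref{2.1}) follows. I do not anticipate any real obstacle here: the only thing to be careful about is the placement of the two minus signs and the direction of the cyclic shift (rows versus columns), so that they genuinely cancel; once the matrix product is written out, the conclusion is immediate. As a remark, one could alternatively note that $Q=-P$ with $P$ a cyclic permutation matrix commuting with every circulant matrix, whence $Q^{T}GQ=P^{T}GP=P^{-1}GP=G$, but the brute-force matrix computation is the most transparent presentation for the paper.
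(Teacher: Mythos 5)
Your proof is correct and follows essentially the same route as the paper, which simply asserts that the isometry property follows immediately from the explicit matrices (\ref{f2}) and (\ref{f4}); you merely spell out the underlying computation $Q^{T}GQ=G$ via the observation that $Q$ is minus a cyclic permutation matrix commuting with every circulant matrix. No gaps.
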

\section{Orthogonal $q$-bases of $T_{p}M$}\label{2}

\begin{defn}
A basis of type $\{x, qx, q^{2}x\}$ of $T_{p}M$ is called a $q$-\textit{basis}. In this case we say that \textit{the vector $x$ induces a $q$-basis of} $T_{p}M$.
\end{defn}
If $x=(x^{1},x^{2},x^{3})\in T_{p}M$, then $qx=(-x^{2},-x^{3},-x^{1})$, $q^{2}x=(x^{3},x^{1},x^{2})$.
Obviously, we have the following
\begin{prop}
A vector $x=(x^{1},x^{2},x^{3})$ induces a $q$-basis of $T_{p}M$ if and only if
   \begin{equation}\label{lema2}
   3x^{1}x^{2}x^{3}\neq (x^{1})^{3}+(x^{2})^{3}+(x^{3})^{3}.
   \end{equation}
\end{prop}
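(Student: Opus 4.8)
The plan is to show that $\{x, qx, q^2x\}$ is a basis of $T_pM$ precisely when these three vectors are linearly independent, and then to compute the determinant of the $3\times 3$ matrix whose rows are the coordinates of these vectors. Using the explicit formulas $qx=(-x^2,-x^3,-x^1)$ and $q^2x=(x^3,x^1,x^2)$ recorded just before the statement, I would write down
\begin{equation*}
D=\begin{vmatrix}
x^1 & x^2 & x^3\\
-x^2 & -x^3 & -x^1\\
x^3 & x^1 & x^2
\end{vmatrix},
\end{equation*}
and the three vectors form a basis if and only if $D\neq 0$.

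Next I would expand this determinant. Pulling the factor $-1$ out of the middle row gives $D=-D'$ where $D'$ is the determinant of the circulant-type matrix with rows $(x^1,x^2,x^3)$, $(x^2,x^3,x^1)$, $(x^3,x^1,x^2)$; this is a genuine circulant matrix built from $x^1,x^2,x^3$, so its determinant is the classical expression $(x^1)^3+(x^2)^3+(x^3)^3-3x^1x^2x^3$ (alternatively just expand along the first row directly). Hence $D=-\big((x^1)^3+(x^2)^3+(x^3)^3-3x^1x^2x^3\big)=3x^1x^2x^3-\big((x^1)^3+(x^2)^3+(x^3)^3\big)$, and $D\neq0$ is exactly the stated inequality \eqref{lema2}.

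There is essentially no obstacle here; the only point needing a word of care is the equivalence between "$x$ induces a $q$-basis" and "$D\neq0$": since $T_pM$ is three-dimensional, three vectors form a basis iff they are linearly independent iff the determinant of their coordinate matrix is nonzero, so this is immediate. I would also remark, as a sanity check, that the right-hand side of \eqref{lema2} factors as $(x^1+x^2+x^3)\big((x^1)^2+(x^2)^2+(x^3)^2-x^1x^2-x^2x^3-x^3x^1\big)$, which makes transparent that the degenerate locus is the union of the plane $x^1+x^2+x^3=0$ and the line $x^1=x^2=x^3$ — exactly the eigenspaces of $q$ — so the condition has a clean geometric meaning, though this remark is not needed for the proof itself.
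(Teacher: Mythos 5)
Your proof is correct, and it is the natural justification for a statement the paper itself dismisses with ``Obviously'' and no proof: three vectors in a three-dimensional space form a basis iff the determinant of their coordinate matrix is nonzero, and that determinant is exactly $(x^1)^3+(x^2)^3+(x^3)^3-3x^1x^2x^3$ up to sign. One small slip: the matrix with rows $(x^1,x^2,x^3)$, $(x^2,x^3,x^1)$, $(x^3,x^1,x^2)$ is the standard circulant with rows $2$ and $3$ swapped, so its determinant is $3x^1x^2x^3-\bigl((x^1)^3+(x^2)^3+(x^3)^3\bigr)$ rather than the expression you quote; this is immaterial since only the nonvanishing of the determinant is used.
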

In our next considerations we use an orthogonal $q$-basis of $T_{p}M$. That's why we will prove the existence of such bases.
\begin{thm}\label{t3.1}
If $x=(x^{1},x^{2},x^{3})$ induces a $q$-basis of $T_{p}M$, then for the angles
 $\varphi=\angle(x,qx),\ \phi=\angle(x,q^{2}x)$ and $\theta=\angle(qx,q^{2}x)$
 we have
 \begin{equation}\label{cos}
    \cos\varphi=-\cos\phi=\cos\theta=-\frac{Ba+(A+B)b}{Aa+2B b}\ ,
\end{equation}
 \begin{equation}\label{ugli}
 \varphi\in \Big(\dfrac{\pi}{3},\pi\Big),\ \phi=\theta\in \Big(0, \dfrac{2\pi}{3}\Big)\ ,
 \end{equation}
 where
 \begin{equation}\label{a,b}
    a=(x^{1})^{2}+(x^{2})^{2}+(x^{3})^{2}, \ b=x^{1}x^{2}+x^{1}x^{3}+x^{2}x^{3}.
 \end{equation}
 \end{thm}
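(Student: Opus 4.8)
The plan is to compute everything directly from the explicit coordinate descriptions of $x$, $qx$, $q^2x$ together with the circulant form of $g$. First I would record the three relevant inner products. Using \eqref{f2} and the fact that $qx=(-x^2,-x^3,-x^1)$ and $q^2x=(x^3,x^1,x^2)$, a routine expansion gives $g(x,x)=g(qx,qx)=g(q^2x,q^2x)=Aa+2Bb$ (the isometry property \eqref{2.1} already guarantees the three norms coincide), while $g(x,qx)$, $g(qx,q^2x)$, $g(x,q^2x)$ each reduce to a symmetric combination of the $x^i$. The key computation is that $g(x,qx)=-\bigl(B a+(A+B)b\bigr)$ and $g(qx,q^2x)=-\bigl(Ba+(A+B)b\bigr)$, whereas $g(x,q^2x)=+\bigl(Ba+(A+B)b\bigr)$; in other words $g(x,qx)=g(qx,q^2x)=-g(x,q^2x)$. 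Dividing by the common norm $Aa+2Bb$ yields \eqref{cos} and in particular $\cos\varphi=\cos\theta=-\cos\phi$, so $\phi=\theta$.

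Next I would pin down the ranges in \eqref{ugli}. Since $x$ induces a $q$-basis, by the previous proposition the three vectors are linearly independent, so none of the angles is $0$ or $\pi$; hence $\cos\varphi\in(-1,1)$. The real content is the \emph{lower} bound $\varphi>\pi/3$, equivalently $\cos\varphi<1/2$. Writing $c:=\cos\varphi=-\dfrac{Ba+(A+B)b}{Aa+2Bb}$, the inequality $c<\tfrac12$ is equivalent (the denominator $Aa+2Bb=g(x,x)>0$) to $2\bigl(Ba+(A+B)b\bigr)+Aa+2Bb>0$, i.e. $(A+2B)a+(2A+4B)b>0$, i.e. $(A+2B)(a+2b)>0$. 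Now $A+2B>0$ by \eqref{ab}, and $a+2b=(x^1+x^2+x^3)^2\ge 0$; moreover $a+2b=0$ would force $x^1+x^2+x^3=0$, and then $a=-2b$ makes the denominator $Aa+2Bb=(A-B)a$... I would instead argue directly: if $a+2b=0$ then $x^1+x^2+x^3=0$, which combined with independence is fine, but then $c=\tfrac12$ is excluded by checking that $a+2b=0$ together with the $q$-basis condition cannot make the numerator degenerate — more cleanly, $a+2b=0$ with $x\ne 0$ is possible, so one must also verify $c=\tfrac12$ itself (not just $c<\tfrac12$) is impossible. Here I would observe that $c=\tfrac12$ forces $a+2b=0$, hence $x^1+x^2+x^3=0$; substituting back one shows the $q$-basis inequality \eqref{lema2} fails, a contradiction. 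Thus $c<\tfrac12$ strictly, giving $\varphi\in(\pi/3,\pi)$, and then $\cos\phi=\cos\theta=-c\in(-\tfrac12,1)$ gives $\phi=\theta\in(0,2\pi/3)$.

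The main obstacle I anticipate is exactly this boundary analysis: the inequalities $\cos\varphi\in(-1,1)$ and $\cos\varphi<\tfrac12$ are not automatic and hinge on combining \eqref{ab}, the positive-definiteness of $g$, and the $q$-basis condition \eqref{lema2} in the right way — in particular ruling out the equality cases $\cos\varphi=\pm1$ and $\cos\varphi=\tfrac12$ requires showing that each would contradict either positive definiteness or \eqref{lema2}. The identity $a+2b=(x^1+x^2+x^3)^2$ is the pivotal algebraic fact that makes the $\varphi>\pi/3$ bound transparent; a parallel identity $a-b=\tfrac12\bigl((x^1-x^2)^2+(x^2-x^3)^2+(x^3-x^1)^2\bigr)\ge 0$ is the natural tool for the remaining endpoint checks. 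Once these positivity facts are in hand, everything else is the bookkeeping of three symmetric-function computations, which I would not belabor.
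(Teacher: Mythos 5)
Your proposal is correct and follows essentially the same route as the paper: compute the three cosines from the circulant form of $g$, then reduce the bound $\cos\varphi<\tfrac12$ (equivalently $\cos\phi>-\tfrac12$) to $(A+2B)(a+2b)>0$, using $a+2b=(x^1+x^2+x^3)^2\ge 0$ and the fact that $x^1+x^2+x^3=0$ would violate \eqref{lema2}, with linear independence handling the endpoints $0$ and $\pi$. The only difference is cosmetic — the paper bounds $\phi$ first and deduces $\varphi$, while you do the reverse.
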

\begin{proof}
According to (\ref{f2}), (\ref{ab}), (\ref{f4}) and (\ref{2.1}), we obtain
\begin{equation*}
    \cos\varphi=\dfrac{g(x, qx)}{g(x,x)}=-\frac{Ba+(A+B)b}{A a+2B b}\ ,
\end{equation*}
\begin{equation*}
    \cos\phi=\dfrac{g(x, q^{2}x)}{g(x,x)}=\frac{B a+(A+B)b}{A a+2B b}\ ,
\end{equation*}
\begin{equation*}
    \cos\theta=\dfrac{g(qx, q^{2}x)}{g(x,x)}=-\frac{B a+(A+B)b}{A a+2B b}\ .
\end{equation*}
Then we have (\ref{cos}).

Now we prove (\ref{ugli}). Since $\varphi, \phi$ and $\theta$ are angles between vectors, we have $\varphi, \phi, \theta\in [0, \pi]$. Let us suppose that $\phi\geq \dfrac{2\pi}{3}$. Then, due to (\ref{cos}) and (\ref{a,b}), we find $(x^{1}+x^{2}+x^{3})^{2}\leq 0$, i.e. $x^{1}+x^{2}+x^{3}=0$. The last equality is in contradiction with the inequality (\ref{lema2}). Consequently, we have $\phi <\dfrac{2\pi}{3}$. Since the vectors $x$ and $q^{2}x$ are linearly independent, we have $\phi >0$ and then $\phi\in \Big(0, \dfrac{2\pi}{3}\Big)$, i.e. $\cos\phi\in \Big(-\dfrac{1}{2}, 1\Big)$. Thus, from (\ref{cos}) we get $\cos\varphi\in \Big(-1,\dfrac{1}{2}\Big)$, i.e. $\varphi\in \Big(\dfrac{\pi}{3}, \pi\Big)$.
\end{proof}

\begin{cor} The following assertions are true:
\begin{itemize}
  \item[(i)] If a vector $x=(x^{1},x^{2},x^{3})$ induces a $q$-basis of $T_{p}M$, then this basis is an orthogonal $q$-basis if and only if
      \begin{equation}\label{11}
        Ba+(A+B)b=0.
      \end{equation}

  \item[(ii)] There exist orthogonal $q$-bases in $T_{p}M$.
\end{itemize}
 \end{cor}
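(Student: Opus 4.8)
The plan is to prove both parts directly from Theorem~\ref{t3.1} and the accompanying formula~(\ref{cos}).

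For part~(i), the idea is that orthogonality of the triple $\{x, qx, q^{2}x\}$ is equivalent to $\cos\varphi = \cos\phi = \cos\theta = 0$. By~(\ref{cos}) all three cosines share the same numerator $Ba+(A+B)b$ up to sign, and the denominator $Aa+2Bb = g(x,x)$ is strictly positive (it is the squared norm of the nonzero vector $x$). Hence every one of the three angles equals $\tfrac{\pi}{2}$ precisely when $Ba+(A+B)b = 0$, which is~(\ref{11}). Conversely, $Ba+(A+B)b=0$ forces all three cosines to vanish, so the basis is orthogonal; this is immediate and requires essentially no computation.

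For part~(ii), the strategy is to exhibit a vector $x$ satisfying simultaneously the $q$-basis condition~(\ref{lema2}) and the orthogonality condition~(\ref{11}). Using the notation~(\ref{a,b}), condition~(\ref{11}) reads $Ba + (A+B)b = 0$, i.e. $b = -\tfrac{B}{A+B}\,a$. Since $0 < B < A+B$ we have $-1 < b/a < 0$, so one needs a vector whose ratio $b/a$ lands in this open interval — for instance, I would look for $x$ with $x^{1}+x^{2}+x^{3}$ small relative to $\|x\|$, noting that $2b = (x^{1}+x^{2}+x^{3})^{2} - a$, so $b/a \in (-\tfrac12, \ldots)$ automatically and $b/a \to -\tfrac12$ as the coordinate sum tends to $0$; choosing the sum zero would give $b/a = -\tfrac12$, which forces $A = B$, contradicting~(\ref{ab}), so instead one picks the sum nonzero but small. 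Concretely, solving $b/a = -B/(A+B)$ together with a convenient normalization (say $x^{3}=0$, reducing to a quadratic in $x^{1}/x^{2}$) produces an explicit admissible $x$; one then checks~(\ref{lema2}) is not violated, i.e. that the resulting $x$ does not also satisfy $3x^{1}x^{2}x^{3} = (x^{1})^{3}+(x^{2})^{3}+(x^{3})^{3}$, which with $x^{3}=0$ simplifies to $(x^{1})^{3}+(x^{2})^{3}\neq 0$, a mild condition.

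The only real obstacle is part~(ii): one must ensure the system~(\ref{11}) admits a solution that is genuinely a $q$-basis, i.e. that the two conditions are compatible. The argument above shows compatibility hinges on the strict inequality $A > B$, which guarantees the target ratio $-B/(A+B)$ lies strictly inside $(-\tfrac12, 0)$ rather than at the forbidden endpoint $-\tfrac12$; once that is observed, producing an explicit $x$ is routine. Part~(i) is essentially a restatement of~(\ref{cos}) and presents no difficulty.
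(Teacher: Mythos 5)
Your proposal is correct and follows essentially the same route as the paper: part (i) is read off directly from \eqref{cos} since the positive denominator $Aa+2Bb=g(x,x)$ makes all three cosines vanish exactly when $Ba+(A+B)b=0$, and part (ii) is settled by producing an explicit admissible vector. Your quadratic $Bt^{2}+(A+B)t+B=0$ in the coordinate ratio, whose discriminant $(A-B)(A+3B)>0$ is guaranteed by \eqref{ab}, reproduces (up to a cyclic relabeling of coordinates) exactly the vector $x=\bigl(0,-(A+B)+\sqrt{(A-B)(A+3B)},2B\bigr)$ given in the paper's Example, and your check that the root differs from $-1$ supplies the verification of \eqref{lema2} that the paper leaves implicit.
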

 \begin{proof}
The assertion (i) follows immediately from (\ref{cos}), and (ii) follows from (\ref{ugli}).
\end{proof}
\textbf{Example.}
Let $x=\Big(0,-(A+B)+\sqrt{(A-B)(A+3B)},2B\Big)\in T_{p}M$. Then according to (\ref{a,b}) we get
\begin{align*}a&=2(A+B)\Big((A+B)-\sqrt{(A-B)(A+3B)} \Big),\\ b&=-2B\Big((A+B)-\sqrt{(A-B)(A+3B)}\Big).
\end{align*}
These values satisfy the condition (\ref{11}). So $x$ induces an orthogonal $q$-basis $\{x, qx, q^{2}x\}$ of $T_{p}M$.

\section{Some curvature properties of $(M, g, q)$}\label{3}
Let $\nabla$ be the Riemannian connection of the metric $g$ on $(M, g, q)$. The curvature tensor $R$ of $\nabla$ is $R(x, y)z=\nabla_{x}\nabla_{y}z-\nabla_{y}\nabla_{x}z-\nabla_{[x,y]}z$. The tensor of type $(0, 4)$  associated with $R$ is defined as follows
\begin{equation*}
    R(x, y, z, u)=g(R(x, y)z,u).
    \end{equation*}
For the local components of the tensor $R$ of type $(1,3)$ we have \cite{11}:
\begin{equation}\label{R12}
    R_{ijk}^{h}=\partial_{j}\Gamma_{ik}^{h}-\partial_{k}\Gamma_{ij}^{h}+\Gamma_{ik}^{t}\Gamma_{tj}^{h}-\Gamma_{ij}^{t}\Gamma_{tk}^{h},
\end{equation}
 where the Christoffel symbols $\Gamma_{ij}^{h}$ of $\nabla$ are
\begin{equation}\label{2.3}
2\Gamma_{ij}^{h}=g^{th}(\partial_{i}g_{tj}+\partial_{j}g_{ti}-\partial_{t}g_{ij}).
\end{equation}
Here $g^{ij}$ are the elements of the inverse matrix of $(g_{ij})$.

We denote $D=(g_{ij})$ and
\begin{equation}\label{31}
    A_{i}=\dfrac{\partial A}{\partial X^{i}}\ ,\quad B_{i}=\dfrac{\partial B}{\partial X^{i}}\ ,
\end{equation}
where $A$ and $B$ are the functions from (\ref{f2}).
From (\ref{f2}), (\ref{R12}), (\ref{2.3}) and (\ref{31}) by direct calculations we obtain the following
 \begin{prop}\label{propR} The nonzero components of the curvature tensor of type $(0,4)$ of the manifold $(M, g, q)$ are
\begin{equation*}
\begin{split}
R_{1212}&= \frac{1}{2}(2B_{21}-A_{11}-A_{22})\\&+\frac{A+B}{4D}\Big(2A_{3}B_{2}-A_{3}^{2}+(B_{1}-B_{2}-B_{3})(B_{1}+B_{2}-B_{3})\Big)\\&-\frac{B}{4D}\Big(2A_{1}(B_{1}+B_{2}-B_{3})-2B_{2}(B_{1}+B_{2}-B_{3})-2A_{1}A_{3}+2A_{3}B_{2}\Big),
\end{split}
\end{equation*}
\begin{equation*}
\begin{split}
R_{1313}&= \frac{1}{2}(2B_{31}-A_{11}-A_{33})\\&+\frac{A+B}{4D}\Big(2A_{2}B_{3}-A_{2}^{2}+(-B_{1}+B_{2}+B_{3})(-B_{1}+B_{2}-B_{3})\Big)\\&-\frac{B}{4D}\Big(2A_{1}(B_{1}-B_{2}+B_{3})-2B_{3}(B_{1}-B_{2}+B_{3})-2A_{1}A_{2}+2A_{2}B_{3}\Big),
\end{split}
\end{equation*}
\begin{equation*}
\begin{split}
R_{2323}&= \frac{1}{2}(2B_{23}-A_{22}-A_{33})\\&+\frac{A+B}{4D}\Big(2B_{3}A_{1}-A_{1}^{2}+(B_{1}-B_{2}+B_{3})(B_{1}-B_{2}-B_{3})\Big)\\&-\frac{B}{4D}\Big(2A_{2}(B_{2}+B_{3}-B_{1})+2B_{3}(B_{1}-B_{2}-B_{3})-2A_{1}A_{2}+2A_{1}B_{3}\Big),
\end{split}
\end{equation*}
\begin{equation*}
\begin{split}
R_{1213}&= \frac{1}{2}(B_{21}+B_{31}-B_{11}-A_{23})\\&+\frac{A+B}{4D}\Big(A_{1}(B_{2}-B_{3}+B_{1})+2B_{3}(-B_{1}-B_{2}+B_{3})+A_{2}A_{3})\Big)\\&-\frac{B}{4D}\Big(A_{1}^{2}+A_{2}^{2}+A_{3}^{2}+2A_{1}(A_{2}-B_{3})-2A_{3}(B_{1}-B_{3})-2A_{2}B_{3}\\&+(B_{1}-B_{2}-B_{3})(B_{1}+B_{2}-B_{3})\Big),
\end{split}
\end{equation*}
\begin{equation*}
\begin{split}
R_{1223}&=\frac{1}{2}(B_{22}-B_{12}-B_{23}+A_{13})\\&+\frac{A+B}{4D}\Big(A_{2}(B_{2}+B_{3}-B_{1})-(2B_{3}-A_{1})(2B_{2}-A_{3})\Big)\\&-\frac{B}{4D}\Big(-A_{1}^{2}+A_{2}^{2}+A_{3}^{2}+2A_{1}(B_{2}+B_{3})+2A_{2}(B_{2}-B_{3})-4B_{2}B_{3}\\&+2A_{3}(B_{3}-B_{1})+(B_{1}+B_{2}-B_{3})(B_{1}-B_{2}-B_{3})\Big),
\end{split}
\end{equation*}
\begin{equation*}
\begin{split}
R_{1323}&= \frac{1}{2}(B_{23}-B_{33}+B_{13}-A_{12})\\&+\frac{A+B}{4D}\Big((2B_{2}-A_{1})(2B_{3}-A_{2})-A_{3}(-B_{1}+B_{2}+B_{3})\Big)\\&-\frac{B}{4D}\Big(A_{1}^{2}-A_{2}^{2}-A_{3}^{2}-2A_{1}(B_{2}+B_{3})+2A_{2}(B_{1}-B_{2})+4B_{2}B_{3}\\&+2A_{3}(B_{2}-B_{3})-B_{1}+B_{2}+B_{3})(B_{1}-B_{2}+B_{3})\Big).
\end{split}
\end{equation*}
The rest of the nonzero components are obtained by the properties $$R_{ijkh}=R_{khij}, \ R_{ijkh}=-R_{jikh}=-R_{ijhk}.$$
\end{prop}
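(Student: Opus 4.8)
The plan is to carry out the standard computation in the coordinates of (\ref{f2})--(\ref{f4}): invert the metric, compute the Christoffel symbols from (\ref{2.3}), compute $R_{ijk}^{h}$ from (\ref{R12}), and lower the index; the only trick needed is to exploit the $S_{3}$-symmetry of the set-up so that the bookkeeping stays finite.

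First I would invert $(g_{ij})$. Being circulant with diagonal entry $A$ and off-diagonal entry $B$, its inverse is again circulant, and solving $A\alpha+2B\beta=1$, $B\alpha+(A+B)\beta=0$ gives
\[
(g^{ij})=\frac{1}{(A-B)(A+2B)}\begin{pmatrix} A+B & -B & -B\\ -B & A+B & -B\\ -B & -B & A+B\end{pmatrix},
\]
so $(g^{ij})$ has the same shape as $(g_{ij})$ with $(A,B)$ replaced by $(A+B,-B)$, divided by $E:=(A-B)(A+2B)$; here $D=\det(g_{ij})=(A-B)^{2}(A+2B)=(A-B)E$. Next, writing (cf. (\ref{31})) $\partial_{k}g_{ii}=A_{k}$ and $\partial_{k}g_{ij}=B_{k}$ for $i\neq j$, the first-kind symbols $P_{ij,t}:=\partial_{i}g_{tj}+\partial_{j}g_{ti}-\partial_{t}g_{ij}$ take only a few shapes -- $P_{ii,i}=A_{i}$, $P_{ii,j}=2B_{i}-A_{j}$, $P_{ij,i}=A_{j}$ for $j\neq i$, and $P_{ij,k}=B_{i}+B_{j}-B_{k}$ for $\{i,j,k\}=\{1,2,3\}$ -- so $2\Gamma_{ij}^{h}=g^{th}P_{ij,t}$ follows by contracting with the circulant $(g^{th})$. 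Feeding the $\Gamma$'s into (\ref{R12}) and setting $R_{ijkl}=g_{lh}R_{ijk}^{h}$ produces each component: the contraction $g_{lh}g^{th}=\delta_{l}^{t}$ collapses one of the two inverse-metric factors present in $\partial_{j}\Gamma_{ik}^{h}-\partial_{k}\Gamma_{ij}^{h}$ and in $\Gamma_{ik}^{t}\Gamma_{tj}^{h}-\Gamma_{ij}^{t}\Gamma_{tk}^{h}$, leaving a denominator-free second-derivative part (the $\tfrac{1}{2}(\cdots)$ with entries $A_{ab},B_{ab}$) together with first-derivative-squared terms carrying a factor $1/E$; rewriting $1/E$ as $(A-B)/D$ and collecting like monomials should recast the latter as the listed pieces with denominator $D$ and coefficients $\tfrac{A+B}{4D}$, $\tfrac{B}{4D}$.

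What keeps the labour bounded is that the entire datum -- the shape of $(g_{ij})$, its inversion, formulas (\ref{2.3}) and (\ref{R12}), and the notation (\ref{31}) -- is preserved, step by step, under the simultaneous action of any $\sigma\in S_{3}$ on coordinate indices and on derivative subscripts; in effect, the polynomial expressions for $\Gamma_{ij}^{h}$, $R_{ijk}^{h}$ and $R_{ijkl}$ transform under the relabelling $A_{a}\mapsto A_{\sigma(a)}$, $A_{ab}\mapsto A_{\sigma(a)\sigma(b)}$, $B_{a}\mapsto B_{\sigma(a)}$, $B_{ab}\mapsto B_{\sigma(a)\sigma(b)}$ exactly as the index strings $ij$, $ijk$, $ijkl$ do. Hence it suffices to compute two representatives by hand, say $R_{1212}$ and $R_{1213}$; applying the cyclic permutation $(1\,2\,3)$ and its square, together with $R_{ijkl}=R_{klij}=-R_{jikl}=-R_{ijlk}$, then yields $R_{1313},R_{2323}$ from the first and $R_{1223},R_{1323}$ from the second (up to sign), while the same symmetries force every component with a repeated index inside a pair to vanish -- which is why the six listed ones exhaust the candidates. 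The only genuine obstacle is the algebraic bulk: expanding the $\Gamma\Gamma$ contributions to $R_{1212}$ and $R_{1213}$ in full and recognising how their many first-derivative monomials regroup into the compact expressions above; running a symbolic-algebra check of those two identities in parallel with the hand computation would be prudent.
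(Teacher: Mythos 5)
Your plan coincides with what the paper actually does (its ``proof'' is the single phrase ``by direct calculations''), and the ingredients are sound: the circulant inverse of $(g_{ij})$ is correct, your catalogue of the first--kind symbols $P_{ij,t}$ is correct, and the observation that after lowering the index the second--derivative terms are denominator--free while the quadratic first--derivative terms carry exactly one factor of $\frac{1}{(A-B)(A+2B)}$ is also correct. The $S_{3}$--equivariance you invoke is genuine, since $(g_{ij})=A\delta_{ij}+B(1-\delta_{ij})$ is invariant under \emph{all} permutations of the indices (not merely the cyclic ones preserving $q$, but $q$ plays no role in the curvature computation); one can check directly that the cyclic shift $1\to2\to3\to1$ carries the displayed $R_{1212}$ into the displayed $R_{2323}$ and the displayed $R_{1213}$ into $-R_{1223}$, so computing only the two representatives $R_{1212}$ and $R_{1213}$ does suffice. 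The count of six independent components is the standard one in dimension three and needs no symmetry argument.

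The one step that would fail as written is your normalization of $D$. You take $D=\det(g_{ij})=(A-B)^{2}(A+2B)$ and plan to convert the factor $1/E$, $E=(A-B)(A+2B)$, into $(A-B)/D$; that leaves a spurious factor $(A-B)$ in every first--derivative term, so the result will not match the stated coefficients $\frac{A+B}{4D}$ and $\frac{B}{4D}$. The paper's definition ``$D=(g_{ij})$'' is garbled, but the intended meaning is pinned down by the example of Section~\ref{5}: there $A_{1}=B_{1}=2$, $A_{2}=A_{3}=0$, $B_{2}=B_{3}=1$, all second derivatives vanish, the displayed formula for $R_{1212}$ collapses to $-B/D$, and the asserted value is $\frac{2X^{1}+X^{2}+X^{3}}{(X^{2}+X^{3})(6X^{1}+2X^{2}+2X^{3})}=-\frac{B}{(A-B)(A+2B)}$, which a from--scratch computation of that metric's curvature confirms (the first--kind symbols are constant there, so $R_{1212}$ reduces to $g^{ts}(\Gamma_{11,s}\Gamma_{22,t}-\Gamma_{12,s}\Gamma_{12,t})=-B/E$). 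Hence the paper's $D$ is $(A-B)(A+2B)$, i.e.\ your $E$, and the rewriting step should simply be dropped. Be warned also that when you match your output against the statement you will have to fix the paper's lowering convention: the displayed second--derivative block $\frac{1}{2}(2B_{21}-A_{11}-A_{22})$ carries the sign appropriate to $R_{ijkh}=g_{th}R^{t}_{ijk}$ with \eqref{R12}, which is opposite to the sign produced by $g(R(\partial_i,\partial_j)\partial_k,\partial_h)$ with the curvature operator as defined in Section~\ref{3}; your proposed symbolic--algebra cross--check is the right tool for resolving this and the remaining typographical slips (e.g.\ the unbalanced parenthesis in the last line of $R_{1323}$).
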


If $\{x, y\}$ is a non-degenerate $2$-plane spanned by vectors $x, y \in T_{p}M$, then its sectional curvature is
\begin{equation}\label{3.3}
    \mu(x,y)=\frac{R(x, y, x, y)}{g(x, x)g(y, y)-g^{2}(x, y)}\ .
\end{equation}
In this section we obtain some curvature properties of a manifold $(M, g, q)$ on which the following identity is valid
\begin{equation}\label{R}
 R(qx,qy, qz, qu)=R(x, y, z, u).
\end{equation}

\begin{thm}\label{th4}
Let $(M, g, q)$ be a manifold with property \eqref{R}. If a vector $u$ induces a $q$-basis of $T_{p}M$ and $\{x, qx, q^{2}x\}$ is an orthonormal $q$-basis of $T_{p}M$, then
\begin{equation}\label{mu-r}
   \mu(u,qu)-\mu(x, qx)=\frac{2\cos\varphi}{1-\cos\varphi}R(x, qx, x, q^{2}x),
\end{equation}
where $\varphi=\angle(u, qu)$.
\end{thm}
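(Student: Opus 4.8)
The plan is to express everything in terms of a fixed orthonormal $q$-basis $\{x,qx,q^2x\}$ and exploit the symmetry hypothesis \eqref{R} together with the identity $q^3=-\id$. First I would observe that, since $u$ induces a $q$-basis, the three vectors $u,qu,q^2u$ form a basis of $T_pM$, so we may write $u=\alpha x+\beta qx+\gamma q^2x$ for some scalars. Applying $q$ and using $q^3x=-x$ gives $qu=-\gamma x+\alpha qx+\beta q^2x$ and $q^2u=-\beta x-\gamma qx+\alpha q^2x$. The key point is that the Gram matrix of $\{u,qu,q^2u\}$ in the orthonormal basis $\{x,qx,q^2x\}$ is a circulant-type matrix built from $\alpha,\beta,\gamma$; in particular $g(u,u)=\alpha^2+\beta^2+\gamma^2$ and $g(u,qu)=-\alpha\gamma+\alpha\beta+\beta\gamma$, so that $\cos\varphi$ is a ratio of these. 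I would record $g(u,u)$ and the denominator $g(u,u)g(qu,qu)-g^2(u,qu)$ appearing in \eqref{3.3}, noting by Proposition on isometry that $g(qu,qu)=g(u,u)$.

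Next I would expand $R(u,qu,u,qu)$ by multilinearity in the basis $\{x,qx,q^2x\}$. Using property \eqref{R}, every curvature component $R(q^ix,q^jx,q^kx,q^lx)$ reduces to one of the "base" components $R(x,qx,x,qx)$, $R(x,q^2x,x,q^2x)=R(qx,x,qx,x)$, etc., after shifting indices cyclically (and applying $q^3=-\id$, which only changes signs in pairs and hence does not affect the degree-four curvature expression). Since $\{x,qx,q^2x\}$ is orthonormal, applying \eqref{R} to the pair $(qx,q^2x)$ versus $(x,qx)$ shows $R(x,qx,x,qx)=R(qx,q^2x,qx,q^2x)=R(q^2x,x,q^2x,x)$, and similarly all three "mixed" components $R(x,qx,x,q^2x)$ and its cyclic shifts coincide. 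So the whole curvature expansion of $R(u,qu,u,qu)$ collapses to a linear combination of just two scalars: $P:=R(x,qx,x,qx)$ and $Q:=R(x,qx,x,q^2x)$, with coefficients that are explicit polynomials in $\alpha,\beta,\gamma$. The same expansion with $u$ replaced by $x$ (i.e. $\alpha=1,\beta=\gamma=0$) shows $\mu(x,qx)=P$ exactly, since the denominator is then $1-\cos^2\varphi_0$ where $\cos\varphi_0=0$ by orthonormality, hence equal to $1$.

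Then the computation becomes bookkeeping: write $\mu(u,qu)=\dfrac{c_P(\alpha,\beta,\gamma)\,P+c_Q(\alpha,\beta,\gamma)\,Q}{g(u,u)^2-g(u,qu)^2}$, subtract $\mu(x,qx)=P$, and simplify. I expect the coefficient of $P$ to reduce, after using $\cos\varphi=g(u,qu)/g(u,u)$, to exactly $g(u,u)^2-g(u,qu)^2$, so that the $P$-terms cancel in $\mu(u,qu)-\mu(x,qx)$, leaving only the $Q$-term; the surviving coefficient should simplify to $\dfrac{2\cos\varphi}{1-\cos\varphi}$ once divided through by $g(u,u)^2$ in numerator and denominator. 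The main obstacle is precisely this last algebraic simplification: one must verify that the messy polynomial $c_Q(\alpha,\beta,\gamma)/(g(u,u)^2-g(u,qu)^2)$ collapses to $2\cos\varphi/(1-\cos\varphi)$, which requires carefully tracking the signs coming from $q^3=-\id$ in the expansion and using that $g(u,qu)=g(qu,q^2u)=g(u,q^2u)\cdot(-1)$-type relations (from \eqref{2.1}) to reduce the number of independent inner products to one. I would double-check the expansion of $R(u,qu,u,qu)$ against the special case $u=x$ and against a second explicit vector to guard against sign errors before asserting the final identity.
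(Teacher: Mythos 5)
Your proposal is correct and follows essentially the same route as the paper: decompose $u=\alpha x+\beta qx+\gamma q^2x$, expand $R(u,qu,u,qu)$ by multilinearity, use \eqref{R} together with $q^3=-\id$ to collapse everything onto the two scalars $R(x,qx,x,qx)$ and $R(x,qx,x,q^2x)$, and then verify the polynomial identities expressing the coefficients as $1-\cos^2\varphi$ and $\cos^2\varphi+\cos\varphi$. The "bookkeeping" you flag as the main obstacle is exactly the paper's final step, and your anticipated cancellations (the $P$-coefficient equaling the denominator $g(u,u)^2-g^2(u,qu)$, the $Q$-coefficient reducing to $2\cos\varphi/(1-\cos\varphi)$) are the correct ones.
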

\begin{proof}
 From (\ref{R}) we find
 \begin{equation}\label{Rv1}
R(q^{2}x,q^{2}y, q^{2}z, q^{2}u)= R(qx,qy, qz, qu)=R(x, y, z, u).
\end{equation}
 In (\ref{Rv1}) we substitute $qx$ for $y$, $q^{2}x$ for $z$ and $x$ for $u$. Comparing the obtained results, we get
    \begin{equation}\label{dop2}
    R(x, qx, q^{2}x, x)=R(qx, q^{2}x, x, qx)=R(q^{2}x, x, qx, q^{2}x).
    \end{equation}
 Let $u=\alpha x+\beta qx +\gamma q^{2}x$, where $\alpha,\beta,\gamma \in \mathbb{R}$. Then $qu=-\gamma x+\alpha qx +\beta q^{2}x$.
 Using the linear properties of the curvature tensor $R$, we obtain
 \begin{equation*}
    \begin{split}
    R(u,qu,u,qu)&=(\alpha^{2}+\beta\gamma)^{2}R(x, qx, x, qx)\\&+(\gamma^{2}+\alpha\beta)^{2}R(x, q^{2}x, x, q^{2}x)\\&+(\beta^{2}-\alpha\gamma)^{2}R(qx, q^{2}x, qx, q^{2}x)\\&-2(\alpha^{2}+\beta\gamma)(\gamma^{2}+\alpha\beta)R(x, qx, q^{2}x, x)\\&-2(\gamma^{2}+\alpha\beta)(\beta^{2}-\alpha\gamma)R(q^{2}x, x, qx, q^{2}x)\\&+2(\alpha^{2}+\beta\gamma)(\beta^{2}-\alpha\gamma)R(x, qx, qx, q^{2}x).
    \end{split}
\end{equation*}
Having in mind (\ref{R}) and (\ref{dop2}), from the latter equality we get
 \begin{equation}\label{cor}
    \begin{split}
    &R(u,qu,u,qu)=\\&=\Big((\alpha^{2}+\beta\gamma)^{2}+(\gamma^{2}+\alpha\beta)^{2}+(\beta^{2}-\alpha\gamma)^{2}\Big)R(x, qx, x, qx)\\&+2\Big((\alpha^{2}+\beta\gamma)(-\gamma^{2}-\alpha\beta)+(-\gamma^{2}-\alpha\beta)(\beta^{2}-\alpha\gamma)\\&-(\alpha^{2}+\beta\gamma)(\beta^{2}-\alpha\gamma)\Big)R(x, qx, q^{2}x, x).
    \end{split}
\end{equation}
Since the $q$-basis $\{x, qx, q^{2}x\}$ is orthonormal we have
\begin{equation*}
    g(u, u)= g(qu, qu)=\alpha^{2}+\beta^{2}+\gamma^{2},\quad g(u, qu)= \alpha\beta+\beta\gamma-\gamma\alpha.
\end{equation*}
Due to (\ref{2.1}) and (\ref{3.3}) we get
\begin{equation*}
    \mu(u,qu)=\frac{R(u, qu, u, qu)}{g(u, u)g(qu, qu)-g^{2}(u, qu)}\ .
\end{equation*}
 We can suppose that $g(u, u)=1$. Then we obtain
     \begin{equation}\label{mu3}
       \mu(u,qu)=\frac{R(u, qu, u, qu)}{1-cos^{2}\varphi}
 \end{equation}
 and
\begin{equation*}
   \alpha^{2}+\beta^{2}+\gamma^{2}=1,\quad
   \alpha\beta+\beta\gamma-\gamma\alpha=\cos\varphi.
\end{equation*}
The latter two equations imply
    \begin{align*}
    \cos^{2}\varphi+ \cos\varphi&= (\alpha^{2}+\beta\gamma)(\gamma^{2}+\alpha\beta)+(\gamma^{2}+\alpha\beta)(\beta^{2}-\alpha\gamma)\\&+(\alpha^{2}+\beta\gamma)(\beta^{2}-\alpha\gamma),\\
      1-\cos^{2}\varphi&=(\alpha^{2}+\beta\gamma)^{2}+(\gamma^{2}+\alpha\beta)^{2}+(\beta^{2}-\alpha\gamma)^{2}.
    \end{align*}
Then, using (\ref{cor}) and (\ref{mu3}) we obtain (\ref{mu-r}).
   \end{proof}
   \begin{thm}\label{th6}
Let $(M,g, q)$ be a manifold with property \eqref{R}. If a vector $u$ induces a $q$-basis of $T_{p}M$ and $\{x, qx, q^{2}x\}$ is an orthonormal $q$-basis of $T_{p}M$, then for $x, u, y \in T_{p}M$ with $\angle(y, qy) =\dfrac{2\pi}{3}$ the following equality is valid
\begin{equation}\label{mu-r2}
   \mu(u, qu)=\frac{1+2\cos\varphi}{1-\cos\varphi}\mu(x, qx)-\frac{3\cos\varphi}{1-\cos\varphi}\mu(y, qy),
\end{equation}
where $\varphi=\angle(u, qu)$.
\end{thm}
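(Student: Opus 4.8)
The plan is to obtain \eqref{mu-r2} from Theorem \ref{th4} by applying formula \eqref{mu-r} twice: once to the given vector $u$ and once to the auxiliary vector $y$. Note first that \eqref{mu-r} expresses $\mu(u,qu)$ in terms of three quantities only: $\mu(x,qx)$, the number $R(x,qx,x,q^{2}x)$ determined by the fixed orthonormal $q$-basis, and the angle $\varphi=\angle(u,qu)$. In particular, for any vector inducing a $q$-basis the value of $\mu(\cdot,q\cdot)$ depends solely on the angle it makes with its $q$-image, which is exactly what makes the quantity $\mu(y,qy)$ in the statement well defined. Before applying \eqref{mu-r} to $y$ one should check that $y$ induces a $q$-basis; this is automatic once $\angle(y,qy)=\tfrac{2\pi}{3}$, because by \eqref{lema2} the only way $y$ can fail to induce a $q$-basis is $y^{1}+y^{2}+y^{3}=0$ or $y^{1}=y^{2}=y^{3}$, and a direct substitution in \eqref{cos} and \eqref{a,b} shows that these force $\angle(y,qy)=\tfrac{\pi}{3}$ and $\angle(y,qy)=\pi$ respectively. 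Existence of such $y$ follows from \eqref{ugli}, since $\tfrac{2\pi}{3}\in(\tfrac{\pi}{3},\pi)$.

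Next I would specialise \eqref{mu-r} to $u=y$. Since $\cos\tfrac{2\pi}{3}=-\tfrac12$, the coefficient $\dfrac{2\cos\varphi}{1-\cos\varphi}$ evaluates to $-\tfrac23$, so \eqref{mu-r} yields $\mu(y,qy)-\mu(x,qx)=-\tfrac23\,R(x,qx,x,q^{2}x)$, that is
\begin{equation*}
R(x,qx,x,q^{2}x)=\tfrac{3}{2}\bigl(\mu(x,qx)-\mu(y,qy)\bigr).
\end{equation*}
Substituting this back into \eqref{mu-r} for the original $u$ gives
\begin{equation*}
\mu(u,qu)-\mu(x,qx)=\frac{2\cos\varphi}{1-\cos\varphi}\cdot\tfrac{3}{2}\bigl(\mu(x,qx)-\mu(y,qy)\bigr)=\frac{3\cos\varphi}{1-\cos\varphi}\bigl(\mu(x,qx)-\mu(y,qy)\bigr),
\end{equation*}
and rearranging, together with the identity $1+\dfrac{3\cos\varphi}{1-\cos\varphi}=\dfrac{1+2\cos\varphi}{1-\cos\varphi}$, produces exactly \eqref{mu-r2}. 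All denominators are nonzero because $\varphi\in(\tfrac{\pi}{3},\pi)$ by \eqref{ugli}, hence $1-\cos\varphi>\tfrac12>0$.

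Since everything reduces to Theorem \ref{th4} plus elementary algebra, I do not anticipate a genuine obstacle. The only steps deserving attention are the verification that Theorem \ref{th4} is legitimately applicable to $y$ (handled above via \eqref{lema2}, \eqref{cos} and \eqref{a,b}) and keeping the signs and coefficients straight when solving for $R(x,qx,x,q^{2}x)$ and resubstituting.
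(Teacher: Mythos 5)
Your proof is correct and follows essentially the same route as the paper: specialize \eqref{mu-r} to $y$ with $\cos\frac{2\pi}{3}=-\frac12$ to solve for $R(x,qx,x,q^{2}x)=\frac{3}{2}\bigl(\mu(x,qx)-\mu(y,qy)\bigr)$, then substitute back into \eqref{mu-r} for $u$. The extra verifications you include (that $y$ necessarily induces a $q$-basis and that the denominators are nonzero) are correct and go slightly beyond what the paper writes, but do not change the argument.
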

\begin{proof}
In (\ref{mu-r}) we substitute $y$ for $u$ and because of the condition \\$\angle(y,qy)=\dfrac{2\pi}{3}$ we get
\begin{equation*}
        R(x, qx, x, q^{2}x)=\dfrac{3}{2}\Big(\mu(x, qx)-\mu(y, qy)\Big).
\end{equation*}
 Taking into account the last result and (\ref{mu-r}), we get (\ref{mu-r2}).
\end{proof}

\begin{prop}
Let $(M, g, q)$ be a manifold with property \eqref{R}. If a vector $u$ induces a $q$-basis of $T_{p}M$, then the sectional curvatures of the $2$-planes $\{u, qu\}$, $\{qu, q^{2}u\}$ and $\{q^{2}u, u\}$ are equal.
\end{prop}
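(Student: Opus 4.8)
The plan is to reduce the statement to the symmetry already extracted in the proof of Theorem~\ref{th4}. The key observation is that the curvature identity \eqref{R} forces $R(qx,qy,qz,qu)=R(x,y,z,u)$, and since $q^3=-\id$ this also gives $R(q^2x,q^2y,q^2z,q^2u)=R(x,y,z,u)$. Applying this with the quadruple $(u,qu,u,qu)$ in place of $(x,y,z,u)$, and using that $q$ maps the ordered pair $(u,qu)$ to $(qu,q^2u)$ and then to $(q^2u,q^3u)=(q^2u,-u)$, we immediately obtain
\begin{equation*}
  R(u,qu,u,qu)=R(qu,q^2u,qu,q^2u)=R(q^2u,u,q^2u,u),
\end{equation*}
where in the last equality I also use the bilinear symmetry $R(-a,b,-a,b)=R(a,b,a,b)$ together with $R(a,b,a,b)=R(b,a,b,a)$. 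So the numerators in the sectional-curvature formula \eqref{3.3} for the three $2$-planes coincide.

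Next I would check that the denominators coincide. Since $q$ is an isometry by \eqref{2.1}, we have $g(qu,qu)=g(u,u)$ and $g(q^2u,q^2u)=g(u,u)$, and likewise $g(qu,q^2u)=g(u,qu)$ and $g(q^2u,q^3u)=g(qu,q^2u)$; combined with $q^3u=-u$ this gives $g(q^2u,u)=g(qu,q^2u)=g(u,qu)$ up to sign, and the denominator $g(a,a)g(b,b)-g^2(a,b)$ is insensitive to that sign. Hence
\begin{equation*}
  g(u,u)g(qu,qu)-g^2(u,qu)=g(qu,qu)g(q^2u,q^2u)-g^2(qu,q^2u)=g(q^2u,q^2u)g(u,u)-g^2(q^2u,u),
\end{equation*}
and all three are non-degenerate because $u$ induces a $q$-basis so none of these pairs is collinear. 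Dividing, \eqref{3.3} gives $\mu(u,qu)=\mu(qu,q^2u)=\mu(q^2u,u)$, which is the claim.

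The only mildly delicate point is bookkeeping of the signs coming from $q^3=-\id$: one must be careful that when the pair $(qu,q^2u)$ is pushed forward once more it becomes $(q^2u,-u)$ rather than $(q^2u,u)$, and then invoke the evenness of both $R(\cdot,\cdot,\cdot,\cdot)$ and the Gram-type denominator under negating an argument. This is routine given the curvature tensor's multilinearity and its symmetries $R_{ijkh}=R_{khij}=-R_{jikh}=-R_{ijhk}$ recorded in Proposition~\ref{propR}, so no real obstacle arises. An alternative, even shorter, route is to note that the map $v\mapsto qv$ is a $g$-isometry that by \eqref{R} preserves the $(0,4)$ curvature tensor, hence preserves all sectional curvatures; applying it to the plane $\{u,qu\}$ yields $\{qu,q^2u\}$ and applying it again yields $\{q^2u,q^3u\}=\{q^2u,u\}$, giving the equality of the three curvatures at once.
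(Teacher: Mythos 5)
Your proposal is correct and follows essentially the same route as the paper: apply the $q$-invariance \eqref{R} (and its iterate \eqref{Rv1}) to the quadruple $(u,qu,u,qu)$ to equate the curvature numerators, then use the isometry property \eqref{2.1} to equate the Gram denominators in \eqref{3.3}. Your explicit bookkeeping of the sign from $q^3=-\id$ is accurate (the minus signs cancel by bilinearity), so nothing is missing.
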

\begin{proof}
In (\ref{Rv1})
we substitute $u$ for $x$, $qu$ for $y$, $u$ for $z$ and $qu$ for $u$. Then we get
       \begin{equation}\label{3.7}
    R(u, qu, u, qu)=R(u, q^{2}u, qu, q^{2}u)=R(qu, q^{2}u, qu, q^{2}u).
    \end{equation}
From (\ref{2.1}), (\ref{3.3}) and (\ref{3.7}) for the sectional curvatures of the $2$-planes of a $q$-basis $\{u, qu, q^{2}u\}$ we obtain
    $\mu(u,qu)= \mu(u, q^{2}u)= \mu(qu, q^{2}u)$.
\end{proof}
\begin{prop}
The property \eqref{R} of the manifold $(M, g, q)$ is equivalent to the conditions
\begin{equation}\label{r1=r6}
    R_{1212}=R_{1313}=R_{2323},\quad R_{1213}=R_{1323}=R_{1223},
\end{equation}
where $R_{ijkh}$ are the local components of the curvature tensor $R$ of type $(0, 4)$.
\end{prop}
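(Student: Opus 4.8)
The plan is to use that \eqref{R} is a tensorial identity: writing $T(x,y,z,u):=R(qx,qy,qz,qu)$, the assertion \eqref{R} is simply $T=R$, and since $q$ is linear the tensor $T$ inherits from $R$ all the algebraic curvature symmetries $R_{ijkh}=R_{khij}=-R_{jikh}=-R_{ijhk}$ together with the first Bianchi identity. Hence, exactly like $R$, on the three-dimensional space $T_pM$ the tensor $T$ is completely determined by its six independent components $T_{1212},T_{1313},T_{2323},T_{1213},T_{1223},T_{1323}$ (these are precisely the components appearing in Proposition~\ref{propR}). Consequently $T=R$ holds if and only if $T_{ijkh}=R_{ijkh}$ for exactly these six index combinations.

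The second step is to express these six components of $T$ through the components of $R$. From \eqref{f4} one reads off $qe_1=-e_3$, $qe_2=-e_1$, $qe_3=-e_2$, so $q$ carries the basis into a cyclically permuted, sign-changed basis. Substituting into $T_{ijkh}=R(qe_i,qe_j,qe_k,qe_h)$, the four sign changes cancel, and after reordering indices by means of the curvature symmetries one is left with $T_{1212}=R_{1313}$, $T_{1313}=R_{2323}$, $T_{2323}=R_{1212}$ for the ``diagonal'' components, and with expressions involving only $R_{1213},R_{1223},R_{1323}$ for the three ``off-diagonal'' ones. Imposing $T_{ijkh}=R_{ijkh}$ then collapses the first triple to $R_{1212}=R_{1313}=R_{2323}$ and the second triple to the three-term relation among $R_{1213},R_{1323},R_{1223}$ recorded in \eqref{r1=r6}; running the argument in reverse, \eqref{r1=r6} forces $T_{ijkh}=R_{ijkh}$ on all six canonical components, hence $T=R$, i.e.\ \eqref{R}.

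The main obstacle I expect is the sign bookkeeping in the second step: each $T_{ijkh}$ first picks up a product of four minus signs coming from $qe_1=-e_3$, $qe_2=-e_1$, $qe_3=-e_2$, and then further signs appear when the antisymmetries $R_{ijkh}=-R_{jikh}=-R_{ijhk}$ and the pair interchange $R_{ijkh}=R_{khij}$ are used to bring each resulting component back to one of the six canonical ones, so it is easy to mishandle a sign. A convenient cross-check is to apply \eqref{R} twice, obtaining $R(q^2x,q^2y,q^2z,q^2u)=R(x,y,z,u)$, where $q^2$ acts as the genuine cyclic permutation $e_1\mapsto e_2\mapsto e_3\mapsto e_1$ with no sign: this makes $R_{1212}=R_{2323}$ and $R_{1313}=R_{1212}$ immediate and re-derives the off-diagonal relations independently. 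One should also note that index choices outside the canonical six (for instance those with a repeated index) produce only relations already implied by \eqref{r1=r6}; this is automatic once $T$ is known to be an algebraic curvature tensor, but it is worth recording so that the system is seen to be consistent and not overdetermined.
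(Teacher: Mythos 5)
Your first step is sound and is genuinely cleaner than the paper's argument: since $q$ is linear, $T(x,y,z,u):=R(qx,qy,qz,qu)$ is again an algebraic curvature tensor, and on a three\-/dimensional space such a tensor is determined by the six components $T_{1212},T_{1313},T_{2323},T_{1213},T_{1223},T_{1323}$, so \eqref{R} is equivalent to matching exactly these six. The paper instead just writes the local form $R_{tslm}q_{i}^{t}q_{j}^{s}q_{k}^{l}q_{h}^{m}=R_{ijkh}$ and lists the component identities it produces; your reduction explains \emph{why} six index choices suffice and why no further (e.g.\ repeated-index) conditions can arise. Your diagonal computations $T_{1212}=R_{1313}$, $T_{1313}=R_{2323}$, $T_{2323}=R_{1212}$ are also correct.

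The gap is in the second step, which you explicitly leave undone: you assert that imposing $T_{ijkh}=R_{ijkh}$ on the three off-diagonal components ``collapses to the three-term relation recorded in \eqref{r1=r6}'', but this is exactly the sign bookkeeping you flag as the obstacle, and it does not come out as you claim. With $qe_{1}=-e_{3}$, $qe_{2}=-e_{1}$, $qe_{3}=-e_{2}$ (the convention consistent with $qx=(-x^{2},-x^{3},-x^{1})$) and the symmetries $R_{ijkh}=R_{khij}=-R_{jikh}=-R_{ijhk}$, one finds $T_{1213}=R_{3132}=R_{1323}$, $T_{1323}=R_{3212}=-R_{1223}$ and $T_{1223}=R_{3112}=-R_{1213}$, so \eqref{R} forces $R_{1213}=R_{1323}=-R_{1223}$ rather than $R_{1213}=R_{1323}=R_{1223}$; the same relations result from the other admissible circulant $q$ and from your $q^{2}$ cross-check. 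So the decisive computation is absent from your proof, and carrying it out honestly yields a conclusion differing from \eqref{r1=r6} by a sign on $R_{1223}$. (This also exposes a sign slip in the paper's own passage from its list of identities --- e.g.\ $R_{2321}=R_{1213}$ together with $R_{2321}=-R_{1223}$ gives $R_{1213}=-R_{1223}$ --- to \eqref{r1=r6}; the example in Section~\ref{5} does not detect it because there all three components vanish.) Either way, a proof that stops before this computation has not established the proposition as stated.
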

\begin{proof}
The local form of (\ref{R}) is
\begin{equation}\label{rlocal}
    R_{tslm}q_{i}^{t}q_{j}^{s}q_{k}^{l}q_{h}^{m}=R_{ijkh}.
\end{equation}
From (\ref{rlocal}) and (\ref{f4}) we find
\begin{align}\label{r1=r3}\nonumber
    R_{1212}=R_{2323},\quad R_{1313}=R_{2121},\\ R_{2321}=R_{1213},\quad
    R_{2331}=R_{1223},\\\nonumber R_{2131}=R_{1323},\quad R_{3131}=R_{2323},
\end{align}
which implies (\ref{r1=r6}).

Vice versa, (\ref{r1=r3}) follows from (\ref{r1=r6}). Having in mind (\ref{f4}), we get (\ref{rlocal}).
\end{proof}
\section{A necessary and sufficient condition for $\nabla q=0$}\label{4}

The covariant derivative of the structure $q$ with respect to $\nabla$ of $g$ is \cite{11}
\begin{equation}\label{nabla}
\nabla_{i}q_{j}^{h}=\partial_{i}q_{j}^{h}+\Gamma_{it}^{h}q_{j}^{t}-\Gamma_{ij}^{t}q_{t}^{h}.
\end{equation}
Let the structure $q$ be parallel with respect to the Riemannian connection $\nabla$ of our manifold $(M, g, q)$, i.e. $\nabla q=0$. Then from (\ref{f4}) and (\ref{nabla}) follows
\begin{equation*}
\Gamma_{11}^{3}=\Gamma_{32}^{1},\qquad \Gamma_{11}^{2}=\Gamma_{32}^{2}, \qquad  \Gamma_{11}^{3}=\Gamma_{32}^{3},
\end{equation*}
\begin{equation}\label{gama}
   \  \Gamma_{21}^{2}=\Gamma_{33}^{2},\qquad \Gamma_{21}^{1}=\Gamma_{33}^{1},\qquad \Gamma_{21}^{3}=\Gamma_{33}^{3},
\end{equation}
\begin{equation*}
     \ \Gamma_{22}^{1}=\Gamma_{13}^{1},\qquad \Gamma_{22}^{2}=\Gamma_{13}^{2},\qquad \Gamma_{22}^{3}=\Gamma_{13}^{3}.
\end{equation*}
Using (\ref{f2}), (\ref{2.3}) and (\ref{gama}), we obtain
\begin{equation}\label{30}
    A_{1}=-B_{1}+B_{2}+B_{3},\ A_{2}=B_{1}-B_{2}+B_{3},\ A_{3}=B_{1}+B_{2}-B_{3}.
\end{equation}
The system (\ref{30}) has the matrix form
\begin{equation*}
    (A_{1}, A_{2}, A_{3})= (B_{1}, B_{2}, B_{3})\left(
    \begin{array}{ccc}
    -1 & 1 & 1 \\
       1 & -1 & 1 \\
       1 & 1 & -1 \\
       \end{array}
     \right).
\end{equation*}
Therefore, we have
     \begin{equation}\label{2.5}
    \grad A=\grad B
    \begin{pmatrix}
      -1 & 1 & 1 \\
      1 & -1 & 1 \\
      1 & 1 & -1 \\
    \end{pmatrix},
\end{equation}
where $\grad A$ and $\grad B$ are gradients of the functions $A$ and $B$.

Vice versa, let (\ref{2.5}) be valid. Then (\ref{f2}) and (\ref{2.3}) imply
\begin{align*}
    \Gamma_{11}^{1}=\Gamma_{12}^{2}=\Gamma_{13}^{3}=\Gamma_{22}^{3}=\Gamma_{23}^{1}=\Gamma_{33}^{2}&=\frac{1}{2D}\Big(AA_{1}+B(B_{2}-3B_{1}+B_{3})\Big),\\ \Gamma_{11}^{3}=\Gamma_{12}^{1}=\Gamma_{13}^{2}=\Gamma_{22}^{2}=\Gamma_{23}^{3}=\Gamma_{33}^{1}&=\frac{1}{2D}\Big(AA_{2}+B(B_{3}-3B_{2}+B_{1})\Big),\\
    \Gamma_{11}^{2}=\Gamma_{12}^{3}=\Gamma_{13}^{1}=\Gamma_{22}^{1}=\Gamma_{23}^{2}=\Gamma_{33}^{3}&=\frac{1}{2D}\Big(AA_{3}+B(B_{1}-3B_{3}+B_{2})\Big).
    \end{align*}
Now, we verify that
    $\Gamma_{it}^{h}q_{j}^{.t}=\Gamma_{ij}^{t}q_{t}^{.h}$
is valid. That means
$\nabla_{i}q_{j}^{s}=0$, i.e. $\nabla q=0$.

Thus, it is valid the following
\begin{thm}
 The structure $q$ is parallel with respect to the Riemannian connection $\nabla$ of $g$ on a manifold $(M, g, q)$ if and only if the condition \eqref{2.5} is valid.
 \end{thm}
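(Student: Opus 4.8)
The plan is to prove the two directions of the equivalence by a direct computation using the coordinate formula (\ref{nabla}) for $\nabla q$, together with the formula (\ref{2.3}) for the Christoffel symbols of the circulant metric (\ref{f2}). Since $q$ has constant entries in (\ref{f4}), we have $\partial_i q_j^h = 0$, so $\nabla_i q_j^h = \Gamma_{it}^h q_j^t - \Gamma_{ij}^t q_t^h$, and the whole question reduces to whether the array of Christoffel symbols ``commutes'' with the fixed circulant matrix $q$.

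For the forward direction, I would assume $\nabla q = 0$ and unpack the $27$ scalar equations $\nabla_i q_j^h = 0$. Because of the special shape of $q$ in (\ref{f4}) (each row has a single nonzero entry, equal to $-1$), each such equation becomes an identification of two Christoffel symbols, and after discarding repetitions one is left with the nine relations (\ref{gama}). Next I would compute the relevant $\Gamma_{ij}^h$ from (\ref{2.3}) using (\ref{f2}): here $g^{th}$ is itself circulant and can be written explicitly in terms of $A,B,D$, and the derivatives $\partial_i g_{jk}$ are built from $A_i,B_i$ as in (\ref{31}). Substituting these expressions into (\ref{gama}) and simplifying, the dependence on $A,B,D$ drops out and one is left precisely with the linear system (\ref{30}) in $A_i,B_i$, which is the componentwise form of (\ref{2.5}).

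For the converse, I would assume (\ref{2.5}), equivalently (\ref{30}), and substitute back into (\ref{2.3}). The claim — already recorded in the displayed list just before the theorem statement — is that the $27$ Christoffel symbols collapse into three values, each shared by six of them, according to the circulant pattern exhibited there. Granting that computation, one then checks the single tensorial identity $\Gamma_{it}^h q_j^t = \Gamma_{ij}^t q_t^h$ for all $i,j,h$; with $q$ as in (\ref{f4}) this amounts to verifying that, for each fixed $i$, the matrix $(\Gamma_{ij}^h)_{j,h}$ commutes with the fixed matrix $q$, which is immediate from the circulant block structure of the three admissible values. Hence $\nabla_i q_j^h = 0$, i.e. $\nabla q = 0$, completing the equivalence.

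The main obstacle is purely computational rather than conceptual: one must carry out the substitution of the explicit inverse metric and the derivative terms into (\ref{2.3}) carefully enough to see the cancellations, both when deriving (\ref{30}) from (\ref{gama}) and when verifying that (\ref{30}) forces the Christoffel symbols into the stated six-fold-degenerate circulant pattern. Organizing the bookkeeping by exploiting the cyclic symmetry $1 \to 2 \to 3 \to 1$ shared by $g$, $q$, and the index set reduces the work to essentially one representative case in each family, so the argument, while tedious, is routine once this symmetry is used systematically.
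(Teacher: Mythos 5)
Your proposal is correct and follows essentially the same route as the paper: both directions reduce $\nabla_i q_j^h=0$ (using the constancy of $q$) to the Christoffel identities (\ref{gama}), which via (\ref{f2}) and (\ref{2.3}) become the linear system (\ref{30}), i.e.\ (\ref{2.5}); and conversely (\ref{30}) is substituted back to exhibit the six-fold-degenerate circulant pattern of the $\Gamma_{ij}^h$ and verify $\Gamma_{it}^h q_j^t=\Gamma_{ij}^t q_t^h$. The only addition is your explicit appeal to the cyclic symmetry to cut down the case-checking, which the paper leaves implicit.
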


 \section{Example of a manifold $(M, g, q)$}\label{5}

Let $(M, g, q)$ be a manifold with a metric $g$ determined by (\ref{f2}) and
\begin{equation}\label{36}
    A=2X^{1},\ B=2X^{1}+X^{2}+X^{3},
\end{equation}
where $$2X^{1}+X^{2}+X^{3}>0,\ X^{2}+X^{3}<0.$$
Obviously, the condition (\ref{ab}) is satisfied.
Due to (\ref{31}), (\ref{36}) and  Proposition~\ref{propR} we obtain
   \begin{equation*}
   \begin{split}
       R_{1212} &=R_{1313}=R_{2323}= \frac{2X^{1}+X^{2}+X^{3}}{(X^{2}+X^{3})(6X^{1}+2X^{2}+2X^{3})}\ ,\\
   R_{1213} &=R_{1323}=R_{1223}=0.
    \end{split}
   \end{equation*}
We check directly that the conditions (\ref{r1=r6}) are valid, but the condition (\ref{2.5}) for the functions $A$ and $B$ is not valid.

Therefore, it is valid the following

\begin{prop}
\begin{itemize} The manifold $(M,g, q)$ with a metric $g$ defined by \eqref{36} has the following properties:
  \item[(i)] The curvature identity \eqref{R} is valid.

  \item[(ii)] The structure $q$ is not parallel with respect to the Riemannian connection $\nabla$ of $g$.

\item[(iii)] $M$ is not a flat manifold.
\end{itemize}
\end{prop}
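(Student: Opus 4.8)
The plan is to treat the three claimed properties as direct consequences of the explicit curvature computation available from Proposition~\ref{propR}, specialized to the metric \eqref{36}. First I would record the partial derivatives coming from \eqref{36}: since $A=2X^{1}$ we have $A_{1}=2$, $A_{2}=A_{3}=0$, and since $B=2X^{1}+X^{2}+X^{3}$ we have $B_{1}=2$, $B_{2}=B_{3}=1$, while every second-order partial $A_{ij}$, $B_{ij}$ vanishes. I would also record $D=\det(g_{ij})=(A-B)^{2}(A+2B)=(X^{2}+X^{3})^{2}(6X^{1}+2X^{2}+2X^{3})$ from the determinant identity in Section~\ref{1}. Substituting these into the six displayed formulas of Proposition~\ref{propR} and simplifying is a routine but somewhat lengthy computation; the outcome is the stated values
\[
R_{1212}=R_{1313}=R_{2323}=\frac{2X^{1}+X^{2}+X^{3}}{(X^{2}+X^{3})(6X^{1}+2X^{2}+2X^{3})}\ ,\qquad R_{1213}=R_{1323}=R_{1223}=0.
\]

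Granting these values, part (i) is immediate: the equalities $R_{1212}=R_{1313}=R_{2323}$ and $R_{1213}=R_{1323}=R_{1223}$ are exactly the conditions \eqref{r1=r6}, which by the Proposition preceding Section~\ref{4} are equivalent to the curvature identity \eqref{R}. For part (ii), I would simply check that \eqref{2.5} fails for the functions in \eqref{36}: computing $\grad A=(2,0,0)$ and $\grad B=(2,1,1)$, the right-hand side of \eqref{2.5} is $\grad B\cdot\bigl(\begin{smallmatrix}-1&1&1\\1&-1&1\\1&1&-1\end{smallmatrix}\bigr)=(0,2,2)\neq(2,0,0)=\grad A$. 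Hence by the Theorem of Section~\ref{4} the structure $q$ is not parallel with respect to $\nabla$. For part (iii), since $2X^{1}+X^{2}+X^{3}>0$ and the denominator $(X^{2}+X^{3})(6X^{1}+2X^{2}+2X^{3})$ is a product of a negative and a positive quantity on the prescribed domain, the common value $R_{1212}=R_{1313}=R_{2323}$ is a nonzero (in fact negative) function, so $R\neq 0$ and $M$ is not flat.

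The only real obstacle is the bookkeeping in the first step: one must substitute $A_{1}=B_{1}=2$, $A_{2}=A_{3}=0$, $B_{2}=B_{3}=1$, and all second derivatives zero into the rather intricate expressions for $R_{1212}$, $R_{1313}$, $R_{2323}$ and for the mixed components $R_{1213}$, $R_{1223}$, $R_{1323}$, and verify both the common nonzero value and the vanishing of the mixed terms. I expect the mixed components to vanish because of the symmetry $B_{2}=B_{3}$ together with $A_{2}=A_{3}=0$, which makes the relevant combinations cancel in pairs; I would exploit this symmetry to shorten the verification rather than expanding all terms blindly. Once that computation is in hand, parts (i)--(iii) follow with no further work beyond quoting the earlier results.
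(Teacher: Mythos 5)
Your proposal matches the paper's argument exactly: the paper likewise substitutes the derivatives of \eqref{36} into Proposition~\ref{propR} to obtain the stated values of $R_{1212}=R_{1313}=R_{2323}$ and the vanishing mixed components, then deduces (i) from the equivalence with \eqref{r1=r6}, (ii) from the failure of \eqref{2.5}, and (iii) from the nonvanishing of the curvature. The computation and the logical structure are the same, so nothing further is needed.
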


\author{Iva Dokuzova \\Department of
Geometry\\ FMI, University of Plovdiv\\Plovdiv, Bulgaria \\
dokuzova@uni-plovdiv.bg}
\end{document}